\newtheorem{thm}{Theorem}[section]
\newtheorem{cor}[thm]{Corollary}
\theoremstyle{definition}
\newtheorem{q}[thm]{Question}
\theoremstyle{remark}
\newtheorem{rem}[thm]{Remark}
\numberwithin{equation}{section}
\newcommand{\thmref}[1]{Theorem~\textup{\ref{#1}}}
\newcommand{\qref}[1]{Question~\textup{\ref{#1}}}
\newcommand{\midtext}[1]{\quad\text{#1}\quad}
\renewcommand{\)}{\textup)}
\newcommand{\ie}{\emph{i.e.}}
\newcommand{\DD}{\mathcal D}
\newcommand{\BB}{\mathcal B}
\renewcommand{\AA}{\mathcal A}
\renewcommand{\epsilon}{\varepsilon}
\renewcommand{\>}{\rangle}
\renewcommand{\bar}{\overline}
\newcommand{\ME}{\underset{\text{M}}{\sim}}
\begin{document}
\title[Fell bundles and imprimitivity theorems]{Fell bundles and imprimitivity theorems: Mansfield's and Fell's theorems}

\author[Kaliszewski]{S. Kaliszewski}
\address{Department of Mathematics and Statistics
\\Arizona State University
\\Tempe, Arizona 85287}
\email{kaliszewski@asu.edu}

\author[Muhly]{Paul S. Muhly}
\address{Department of Mathematics
\\The University of Iowa
\\Iowa City, IA 52242}
\email{paul-muhly@uiowa.edu}

\author[Quigg]{John Quigg}
\address{Department of Mathematics and Statistics
\\Arizona State University
\\Tempe, Arizona 85287}
\email{quigg@asu.edu}

\author[Williams]{Dana P. Williams}
\address{Department of Mathematics
\\Dartmouth College
\\Hanover, NH 03755}
\email{dana.williams@dartmouth.edu}

%\subjclass[2000]{Primary 46L55; Secondary 46M15, 18A25} % Not NYJ
\subjclass{Primary 46L55; Secondary 46M15, 18A25} % NYJ 

\keywords{imprimitivity theorem, Fell bundle, groupoid}

\date{July 24, 2012}
%\date{\today}

\begin{abstract}
In the third and latest paper in this series, we recover the imprimitivity theorems of Mansfield and Fell using our technique of Fell bundles over groupoids.
Also, we apply the Rieffel Surjection of the first paper in the series
to relate our version of Mansfield's theorem to that of an Huef and Raeburn, and to give an automatic amenability result for certain transformation Fell bundles.
\end{abstract}
\maketitle

\section{Introduction}
\label{intro}

This is a sequel to our earlier papers \cite{kmqw2, kmqw3},
and completes our task of showing that all known imprimitivity theorems involving groups can be unified via the Yamagami-Muhly-Williams equivalence theorem (to which we will refer as the YMW Theorem) \cite{mw:fell, yam:symmetric}, which shows how an equivalence between Fell bundles gives rise to a Morita equivalence between their $C^*$-algebras.
In \cite{kmqw2} we showed how the YMW Theorem can be used to derive Raeburn's symmetric imprimitivity theorem (which, as Raeburn points out, quickly implies both the Green-Takesaki imprimitivity theorem for induced representations of $C^*$-dynamical systems and Green's imprimitivity theorem for induced actions). To this end, we first proved what we called the Symmetric Action Theorem for commuting free and proper actions by automorphisms of groups on Fell bundles over groupoids. In \cite{kmqw2} we also proved what we called the One-Sided Action Theorem, a special case of the Symmetric Action Theorem with one group trivial. We also proved a structure theorem characterizing free and proper actions on Fell bundles: using a result perhaps due to Palais, such actions all arise from transformation Fell bundles, which were studied in \cite{kmqw1}.

In \cite{kmqw3} we studied the One-Sided Action Theorem further, deriving a curious connection with Rieffel's imprimitivity theorem for generalized fixed-point-algebras: the imprimitivity bimodule in Rieffel's theorem is a quotient of the imprimitivity bimodule in the One-Sided Action Theorem.
Consequently, it is reasonable to regard the $C^*$-algebra of an orbit Fell bundle by a free and proper group action as a ``universal'', or ``full'', version of a Rieffel-type generalized fixed-point algebra.

In the current paper we show how the YMW Theorem can be used to prove both Mansfield's imprimitivity theorem, which is the dual to the Green-Takesaki theorem, and Fell's original imprimitivity theorem, which involves the restriction of a Fell bundle to a subgroup.

In addition, we apply the Rieffel Surjection of \cite{kmqw3} to relate our version 
of Mansfield's theorem to that of an Huef and Raeburn in \cite{aHRMansfield},
and we further give an automatic amenability result for transformation Fell bundles of the form $\BB\times G/H$,
where $\BB\to G$ is a Fell bundle over a group and
 $H$ is an amenable subgroup of $G$.

\section{Preliminaries}\label{prelims}

We adopt the conventions of \cite{kmqw1,kmqw2}.
All our Banach bundles will be upper semicontinuous and separable, all our spaces and groupoids will be locally compact Hausdorff and second countable, and our groupoids will all have left Haar systems.
Convenient references for the various types of coactions (reduced, full, normal, maximal) we discuss are \cite{boiler} and \cite[Introduction]{kmqw1}.

In order to place our version of Mansfield's theorem in context, it is perhaps helpful to include a short history of this imprimitivity theorem.
Mansfield's original imprimitivity theorem \cite[Theorem~27]{mansfield} states that, if $\delta$ is a reduced coaction of a locally compact group $G$ on a $C^*$-algebra $A$, and $H$ is a closed amenable normal subgroup of $G$, then $\delta$ restricts to a reduced coaction $\delta|$ of the quotient group $G/H$ on $A$, and there is a Morita equivalence
\begin{equation}\label{mansfield reduced}
A\rtimes_\delta G\rtimes_{\hat\delta|,r} H\ME A\rtimes_{\delta|} G/H.
\end{equation}
Switching from reduced to full coactions, the amenability hypothesis was removed in \cite[Theorem~3.3]{KalQuiMansfield}, where \eqref{mansfield reduced} was proved under the assumption that the coaction $\delta$ is normal.

On the other hand, if $\delta$ is maximal (and $H$ is any closed normal subgroup of $G$), \cite[Theorem~5.3]{KalQuiFullMansfield} gives a version of the Mansfield imprimitivity theorem for the full crossed product by the dual action:
\begin{equation}\label{mansfield full}
A\rtimes_\delta G\rtimes_{\hat\delta|} H\ME A\rtimes_{\delta|} G/H.
\end{equation}
Here the restricted coaction $\delta|$ of $G/H$ is also maximal, by \cite[Corollary~7.2]{KalQuiFullMansfield}.

Theorem~3.1 of
\cite{eq:full} says that if $p:\BB\to G$ is a Fell bundle over a discrete group $G$, and if $H$ is any subgroup of $G$, then
\begin{equation}\label{EQ}
C^*(\BB)\rtimes_\delta G\rtimes_{\hat\delta|} H\ME C^*(\BB\times G/H),
\end{equation}
where 
$\delta$ is the canonical coaction of $G$ on $C^*(\BB)$, determined by $\delta(b)=b\otimes p(b)$ for $b\in\BB$,
and
$\BB\times G/H\to G\times G/H$ is the transformation Fell bundle (as in \cite[Section~4]{kmqw1}) associated to the action of $G$ on itself by left translation.
When the subgroup $H$ is normal, \eqref{EQ} is a special case of \eqref{mansfield full},
because 
by \cite[Proposition~4.2]{ekq} the 
coaction $\delta$ of $G$ on $C^*(\BB)$ is maximal, and
by \cite[Corollary~2.12]{eq:full} we have $C^*(\BB\times G/H)\cong C^*(\BB)\rtimes_{\delta|} G/H$.

Back to reduced coactions, but removing the hypothesis of normality (as well as the amenability) of $H$, \cite[Theorem~5.1]{EKRHomogeneous} and \cite[Theorem~3.1]{aHRMansfield} give a version of Mansfield's imprimitivity theorem for homogeneous spaces:
\begin{equation}\label{homogeneous reduced}
A\rtimes_\delta G\rtimes_{\hat\delta|,r} H\ME A\rtimes_{\delta,r} G/H,
\end{equation}
where now $A\rtimes_{\delta,r} G/H$ is defined as the closed span of the products $j_A(a)\bar{j_G}(f)$ for $a\in A$ and $f\in C_0(G/H)$, and where the latter is identified with its canonical image in $C_b(G)=M(C_0(G))$.

It is natural to ask:
\begin{q}\label{mansfield question}
Is there a common generalization of \eqref{mansfield full} and \eqref{homogeneous reduced}?
\end{q}
Such a generalization would be a version of \eqref{mansfield full} for arbitrary closed subgroups $H$, and also a version of \eqref{homogeneous reduced} for full crossed products.
More precisely, such a result would (presumably) say that if $(A,\delta)$ is a maximal coaction of a locally compact group $G$ and $H$ is a closed subgroup of $G$, then the full crossed product $A\rtimes_\delta G\rtimes_{\hat\delta} H$ is Morita equivalent to a ``restricted crossed product'' $A\rtimes_{\delta|} G/H$.
However, it is not clear how to get an appropriate analogue of the restricted crossed product $A\rtimes_{\delta|} G/H$.
\thmref{mansfield new} below will give a version of such a result in the case that $A=C^*(\BB)$ for a Fell bundle $\BB\to G$. 
This will not completely answer \qref{mansfield question}, because, while it is true that every maximal coaction is Morita equivalent to one of the form $(C^*(\BB),\delta)$, 
the restricted crossed product is usually identified with a subalgebra of the multiplier algebra $M(A\rtimes_\delta G)$, and 
there is no mechanism for inducing arbitrary $C^*$-subalgebras across imprimitivity bimodules.

\section{Mansfield's imprimitivity theorem}
\label{mansfield}

\begin{thm}\label{mansfield new}
Let $\BB\to G$ be a Fell bundle over a locally compact group, and let $H$ be a closed subgroup of $G$. Let $\delta$ be the canonical coaction of $G$ on $C^*(\BB)$, and let 
$\hat\delta|$ be the restriction to $H$ of the dual action of $G$ on $C^*(\BB)\rtimes_\delta G$.
Further let $\BB\times G/H\to G\times G/H$ denote the transformation Fell bundle associated to the action of $G$ on $G/H$ by left translation.
Then there is a Morita equivalence
\begin{equation}\label{mansfield new equation}
C^*(\BB)\rtimes_\delta G\rtimes_{\hat\delta|} H\ME C^*(\BB\times G/H).
\end{equation}
\end{thm}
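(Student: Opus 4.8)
The plan is to deduce the Morita equivalence from the Yamagami--Muhly--Williams equivalence theorem \cite{mw:fell, yam:symmetric}, in the guise of the One-Sided Action Theorem of \cite{kmqw2}, by realizing both sides of \eqref{mansfield new equation} as $C^*$-algebras of Fell bundles over groupoids and then exhibiting the left-hand side as the crossed product of such a Fell bundle by a free and proper action of $H$ whose orbit Fell bundle is $\BB\times G/H$. The model for the whole argument is the discrete case \eqref{EQ} of \cite[Theorem~3.1]{eq:full}; the real task is to carry the construction through in the locally compact setting.

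The first and central step is to identify the coaction crossed product $C^*(\BB)\rtimes_\delta G$ with the $C^*$-algebra of a Fell bundle over the transformation groupoid $G\ltimes G$ associated to $G$ acting on itself by left translation. Writing $q\colon G\ltimes G\to G$, $q(g,s)=g$, for the groupoid homomorphism given by the first-coordinate projection, I would show
\begin{equation*}
C^*(\BB)\rtimes_\delta G\cong C^*(q^*\BB),
\end{equation*}
where $q^*\BB$ is the pullback Fell bundle, whose fibre over $(g,s)$ is $\BB_g$. Concretely, the canonical map $j_{C^*(\BB)}$ should send $\BB_g$ into the sections supported on the arrows $(g,s)$, while $j_G$ should recover the $C_0$-functions on the unit space $G$; the coaction covariance relation, which in the discrete case reads $j_{C^*(\BB)}(b)\,j_G(f)=j_G(\lt_{p(b)}f)\,j_{C^*(\BB)}(b)$, matches exactly the range/source bookkeeping of $q^*\BB$. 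Under this isomorphism the dual action $\hat\delta$ of $G$ becomes the action of $G$ on $q^*\BB$ induced by right translation of the unit space $G$ of $G\ltimes G$.

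Granting this, the restricted action $\hat\delta|$ becomes the action of $H$ on $q^*\BB$ induced by right translation of $G$ by $H$, which is free and proper because $H$ acts freely and properly on $G$. The One-Sided Action Theorem of \cite{kmqw2} then yields
\begin{equation*}
C^*(q^*\BB)\rtimes_{\hat\delta|}H\ME C^*\bigl((q^*\BB)/H\bigr),
\end{equation*}
the orbit Fell bundle living over the orbit groupoid $(G\ltimes G)/H$. A direct computation identifies this orbit groupoid with $G\ltimes(G/H)$ via $[(g,s)]\mapsto(g,sH)$, identifies its unit space with $G\times G/H=G\ltimes(G/H)$, and identifies $(q^*\BB)/H$ with the transformation Fell bundle $\BB\times G/H$; combining this with the identification of the previous paragraph gives \eqref{mansfield new equation}.

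The main obstacle is the first step. In the discrete case the isomorphism $C^*(\BB)\rtimes_\delta G\cong C^*(q^*\BB)$ is essentially bookkeeping with finitely supported sections, but in the locally compact setting one must build the isomorphism at the level of the dense $*$-algebras of continuous compactly supported sections, respecting the Haar system on $G\ltimes G$, the upper semicontinuous bundle structure, and the approximate-identity arguments needed to see that $j_{C^*(\BB)}$ and $j_G$ generate; one must also verify carefully that the dual action is transported to the asserted translation action, so that its restriction to $H$ is visibly free and proper. Maximality of the canonical coaction \cite[Proposition~4.2]{ekq} is what guarantees that it is the full coaction crossed product, rather than a reduced variant, that is so described, and hence that the full crossed product by $H$ appears on the left of \eqref{mansfield new equation}.
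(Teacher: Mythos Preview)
Your approach is essentially the paper's: act by $H$ on the right of the transformation Fell bundle $\BB\times G\to G\times G$ (your $q^*\BB$ over $G\ltimes G$), invoke the One-Sided Action Theorem of \cite{kmqw2} to get a Morita equivalence with the orbit bundle, and identify the orbit bundle with $\BB\times G/H$.

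The one point worth flagging is that what you call ``the main obstacle''---the isomorphism $C^*(\BB)\rtimes_\delta G\cong C^*(\BB\times G)$ and the transport of the dual action to right translation---is not something you need to construct from scratch. These are exactly \cite[Theorem~5.1]{kmqw1} and (the proof of) \cite[Proposition~8.2]{kmqw1}, and the paper simply cites them; so the proof is considerably shorter than your last paragraph anticipates. Also, your closing remark about maximality is off target: the coaction crossed product $A\rtimes_\delta G$ has no full/reduced dichotomy, and the cited isomorphism from \cite{kmqw1} is proved directly, not via maximality. Maximality matters for the context of \qref{mansfield question} but plays no role in the proof itself.
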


\begin{proof}
By translation in the second coordinate, $H$ acts by automorphisms on the right of the transformation Fell bundle $p:\BB\times G\to G\times G$.
Applying the One-Sided Action theorem \cite[Corollary~2.3]{kmqw2}
gives the Fell-bundle equivalence
\begin{equation}\label{equivalence}
(\BB\times G)/H\ME H\ltimes (\BB\times G).
\end{equation}
Using the obvious isomorphism
\[
(\BB\times G)/H\cong \BB\times G/H,
\]
apply the Yamagami-Muhly-Williams equivalence theorem
\cite[Theorem~6.4]{mw:fell}
to \eqref{equivalence} to get a Morita equivalence
\[
C^*(\BB\times G/H)\ME C^*(H\ltimes (\BB\times G))
\]
Switching the sides and applying the isomorphism 
$C^*(H\ltimes (\BB\times G))\cong C^*(\BB\times G)\rtimes H$
from \cite[Theorem~7.1]{kmqw1}, where the action of $H$ on $C^*(\BB\times G)$ 
is associated to right-translation in the second coordinate on the Fell bundle $\BB\times G$,
gives
\[
C^*(\BB\times G)\rtimes H\ME C^*(\BB\times G/H).
\]
Finally, \cite[Theorem~5.1]{kmqw1}
gives an isomorphism $C^*(\BB\times G)\cong C^*(\BB)\rtimes_\delta G$,
which, by the proof of \cite[Proposition~8.2]{kmqw1}, carries 
the action of $H$ on $C^*(\BB\times G)$ to
the (restriction to $H$ of the) dual action on the crossed product $C^*(\BB)\rtimes_\delta G$,
and the result follows.
\end{proof}

In \thmref{mansfield new}, the right-hand $C^*$-algebra in the Morita-equivalent pair is the Fell-bundle algebra $C^*(\BB\times G/H)$. 
However, as we have seen in the introduction, in most versions of Mansfield imprimitivity this algebra is 
some sort of crossed product of $C^*(\BB)$ by a restriction, $\delta |$, of $\delta$ to $G/H$.
In the case of \thmref{mansfield new},
by analogy with the notation in \eqref{homogeneous reduced}, it seems reasonable to regard $C^*(\BB\times G/H)$ as a ``full crossed product'' $C^*(\BB)\rtimes_{\delta|} G/H$ by the (heretofore undefined) restricted coaction $\delta|$ of the homogeneous space $G/H$.
On the other hand, when $H$ is normal it seems prudent to check whether $C^*(\BB\times G/H)$ is isomorphic to the crossed product $C^*(\BB)\rtimes_{\delta|} G/H$ by the (well-defined) restricted coaction $\delta|$ of the quotient group $G/H$ on $C^*(\BB)$. Fortunately, this is indeed the case:

\begin{thm}\label{normal}
Let $\BB\to G$ be a Fell bundle over a locally compact group, let $H$ be a closed normal subgroup of $G$, and let $\delta$ be the canonical coaction of $G$ on $C^*(\BB)$. Then there is an isomorphism
\[
\theta:C^*(\BB)\rtimes_{\delta|} G/H\to C^*(\BB\times G/H)
\]
such that
\begin{equation}\label{theta H}
\theta\bigl(j_{C^*(\BB)}(f)j_G(g)\bigr)
=(\Delta^{1/2}f)\boxtimes g
\midtext{for}f\in \Gamma_c(\BB),g\in C_c(G),
\end{equation}
where $\Delta$ is the modular function of $G$ and $(\Delta^{1/2}f)\boxtimes g$ denotes the 
element of $\Gamma_c(\BB\times G/H)$ defined by
\[
(f\boxtimes g)(s,tH)=(\Delta(s)^{1/2}f(s)g(tH),tH).
\]
\end{thm}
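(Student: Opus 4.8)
The plan is to construct $\theta$ directly on the canonical dense $*$-subalgebra of $C^*(\BB)\rtimes_{\delta|}G/H$ by the formula \eqref{theta H}, extended by linearity, and then to prove that it extends to a $*$-isomorphism onto $C^*(\BB\times G/H)$. This is the exact homogeneous-space analogue --- and the locally compact generalization --- of the isomorphism $C^*(\BB\times G)\cong C^*(\BB)\rtimes_\delta G$ of \cite[Theorem~5.1]{kmqw1} and of \cite[Corollary~2.12]{eq:full} (which handles discrete $G$, where $\Delta\equiv1$ and the correction factor disappears), obtained by putting $G/H$ in place of the second copy of $G$. First I would record the two concrete $*$-algebra structures. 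On the target, $\Gamma_c(\BB\times G/H)$ carries the convolution and involution of the transformation groupoid $G\ltimes G/H$, whose Haar system is Haar measure on $G$, so that (up to the usual modular conventions) $(\xi*\eta)(s,tH)=\int_G\xi(r,tH)\,\eta(r\inv s,r\inv tH)\,dr$ and $\xi^*(s,tH)=\xi(s\inv,stH)^*$. On the source, $C^*(\BB)\rtimes_{\delta|}G/H$ is the closed span of products $j_{C^*(\BB)}(f)\,j_G(g)$, where $j_{C^*(\BB)}$ and $j_{G/H}$ are the canonical nondegenerate homomorphisms of $C^*(\BB)$ and $C_0(G/H)$, where $j_G$ is $j_{G/H}$ precomposed with the averaging $C_c(G)\to C_c(G/H)$ over $H$ (so that $g(tH)$ in \eqref{theta H} abbreviates $\int_H g(th)\,dh$), and where the decisive structural input is the defining $\delta|$-covariance relation: it couples the $G$-grading of $C^*(\BB)$ to left translation on $G/H$, so that commuting $j_{C^*(\BB)}(b)$ for $b\in\BB_s$ against $j_G(g)$ translates the argument of $g$ by $s$ --- precisely the action of $G$ on $G/H$ built into the transformation Fell bundle.

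With these two descriptions in hand, the heart of the argument is to verify that $\theta$ is multiplicative and $*$-preserving. For multiplicativity I would expand a product $\bigl(j_{C^*(\BB)}(f)\,j_G(g)\bigr)\bigl(j_{C^*(\BB)}(f')\,j_G(g')\bigr)$, use the covariance relation to move the inner $j_G(g)$ past $j_{C^*(\BB)}(f')$ (translating $g$ along the $G$-grading of $f'$), and match the outcome, after integrating against Haar measure, with the groupoid convolution of $(\Delta^{1/2}f)\boxtimes g$ and $(\Delta^{1/2}f')\boxtimes g'$. The $\Delta^{1/2}$ twist in \eqref{theta H} is exactly what reconciles the normalizations: in the convolution integral the cross term $\Delta(r)^{1/2}\Delta(r\inv s)^{1/2}=\Delta(s)^{1/2}$ eliminates the integration variable from the modular factor and leaves precisely the single $\Delta(s)^{1/2}$ that \eqref{theta H} demands, while Weil's formula relating Haar measure on $G$ to those on $H$ and on the quotient group $G/H$ governs the passage of the $g$-factors to $G/H$. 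The involution check is shorter and of the same flavor: one computes $\bigl(j_{C^*(\BB)}(f)\,j_G(g)\bigr)^*$ in the crossed product, applies $\theta$, and compares with $\bigl((\Delta^{1/2}f)\boxtimes g\bigr)^*$, where the interplay of $\Delta(s)^{1/2}$ with $\Delta(s\inv)=\Delta(s)\inv$ supplies exactly the factor needed to pass to $\xi^*(s,tH)=\xi(s\inv,stH)^*$.

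Finally, to upgrade $\theta$ to an isomorphism of $C^*$-algebras, I would verify that it has dense range --- which holds because the elementary sections $(\Delta^{1/2}f)\boxtimes g$ span a subspace of $\Gamma_c(\BB\times G/H)$ that is dense in the inductive-limit topology --- and that it is isometric for the universal norms. The efficient route to injectivity and the norm equality is to match universal properties: a nondegenerate representation of $C^*(\BB\times G/H)$ corresponds, through $\theta$ and the canonical maps $j_{C^*(\BB)},j_{G/H}$, to a covariant representation of the coaction $(C^*(\BB),\delta|)$ and conversely, which produces a $*$-homomorphic inverse of $\theta$ defined on $\Gamma_c(\BB\times G/H)$ by the reverse assignment (an appropriate $\Delta^{-1/2}$-rescaling of $f\boxtimes g$). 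Here the structural parallel with \cite[Theorem~5.1]{kmqw1} is decisive: once the modular bookkeeping of the previous paragraph is settled, the injectivity and norm arguments given there for $C^*(\BB\times G)\cong C^*(\BB)\rtimes_\delta G$ transfer essentially verbatim with $G/H$ replacing the second factor.

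The step I expect to be the main obstacle is the multiplicativity computation, and within it the measure-theoretic bookkeeping that pins down the $\Delta^{1/2}$ normalization: one must simultaneously juggle Weil's formula, the left-translation action of $G$ on $G/H$ hidden in the covariance relation, and the Haar system of $G\ltimes G/H$, and confirm that every modular-function factor cancels correctly. These factors are absent in the discrete case of \cite[Corollary~2.12]{eq:full}, so this is exactly the new content in passing to locally compact $G$. A secondary point demanding care is the well-definedness of $j_G$ through the averaging map $C_c(G)\to C_c(G/H)$ together with the verification that the restricted coaction $\delta|$ genuinely implements left translation on $G/H$ (and not on $G$), which is where normality of $H$ --- ensuring that $G/H$ is itself a group carrying a Haar measure --- enters.
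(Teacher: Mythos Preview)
Your approach is correct in spirit and closely parallels the paper's, but the paper organizes the construction more efficiently. Rather than defining $\theta$ on the spanning elements $j_{C^*(\BB)}(f)j_G(g)$ and verifying multiplicativity and $*$-preservation by hand, the paper invokes \cite[Proposition~2.1]{kmqw3} to obtain nondegenerate homomorphisms $\Phi:C^*(\BB)\to M(C^*(\BB\times G/H))$ and $\mu:C_0(G/H)\to M(C^*(\BB\times G/H))$, checks that the pair $(\Phi,\mu)$ is covariant for $\delta|$, and then takes $\theta=\Phi\times\mu$ as the integrated form. This is automatically a well-defined $*$-homomorphism by the universal property of the crossed product, and surjectivity is immediate from \cite[Proposition~2.1]{kmqw3}. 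Your multiplicativity computation (commuting $j_{C^*(\BB)}(b)$ past $j_G(g)$ via translation) is precisely the covariance verification in disguise, so the mathematical content is the same; the paper's packaging simply sidesteps the well-definedness issue that your formulation leaves implicit (the products $j_{C^*(\BB)}(f)j_G(g)$ span but are not linearly independent, so extending by linearity requires justification). For both covariance and injectivity the paper, like you, appeals to routine adaptations of the proof of \cite[Theorem~5.1]{kmqw1}, so on that point your proposal and the paper agree exactly.
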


\begin{proof}\
\cite[Proposition~2.1]{kmqw3} gives us nondegenerate homomorphisms $\Phi$ and $\mu$ of $C^*(\BB)$ and $C_0(G/H)$, respectively, into $M(C^*(\BB\times G/H))$; we need to know that the pair $(\Phi,\mu)$ is covariant. It will then follow from \cite[Proposition~2.1]{kmqw3} that the integrated form $\theta:=\Phi\times\mu$ is surjective, and it will remain to show that $\theta$ is injective. Luckily, the hard work has already been done: the covariance and the injectivity can be proven via routine adaptations of the proof of \cite[Theorem~5.1]{kmqw1} (with the proof of covariance using a suitable routine adaptation of \cite[Proposition~3.4]{kmqw1}).
\end{proof}

\section{Mansfield and the Rieffel Surjection}

Let $\BB\to G$ be a Fell bundle over a locally compact group, and let $H$ be a closed subgroup of $G$.
Let $X$ be the $C^*(\BB)\rtimes_\delta G\rtimes_{\hat\delta|} H-C^*(\BB\times G/H)$ imprimitivity bimodule from \thmref{mansfield new}.
Then $H$ acts freely and properly on (the right of) the Fell bundle $\BB\times G\to G\times G$,
and the orbit Fell bundle is isomorphic to $\BB\times G/H\to G\times G/H$,
so by \cite[Theorem~3.1]{kmqw3} we have a Rieffel Surjection
\begin{multline}\label{Rie sur}
(\Lambda,\Upsilon,\Phi):
(C^*(\BB\times G)\rtimes_\alpha H,X,C^*(\BB\times G/H))
\\
\to
(C^*(\BB\times G)\rtimes_{\alpha,r} H,X_R,C^*(\BB\times G)^\alpha)
\end{multline}
of imprimitivity bimodules,
where $C^*(\BB\times G)^\alpha$ denotes the generalized fixed-point algebra.
We can replace the left-hand coefficient $C^*$-algebra $C^*(\BB\times G)\rtimes_\alpha H$ of $X$
by either of the isomorphic algebras
\[
C^*(\BB)\rtimes_\delta G\rtimes_{\hat\delta|} H
\midtext{or}
C^*((\BB\times G)\rtimes H).
\]
Similarly, for $X_R$ we can replace the left-hand coefficient $C^*$-algebra $C^*(\BB\times G)\rtimes_{\alpha,r} H$ 
by either of the isomorphic algebras 
$C^*(\BB)\rtimes_\delta G\rtimes_{\hat\delta|,r} H$
or, by \cite[Example~11]{SimsWilliamsReduced},
$C^*_r((\BB\times G)\rtimes H)$,
and the right-hand coefficient algebra $C^*(\BB\times G)^\alpha$ 
by either of the isomorphic algebras
$(C^*(\BB)\rtimes_\delta G)^{\hat\delta|}$
or, by \cite[Corollary~3.5]{kmqw3},
$C^*_r(\BB\times G/H)$.

In particular, we can write the Rieffel Surjection \eqref{Rie sur} as
\begin{multline}\label{mansfield surjection}
(\Lambda,\Upsilon,\Lambda):
(C^*(\BB\times G)\rtimes_\alpha H,X,C^*(\BB\times G/H)
\\
\to
(C^*(\BB\times G)\rtimes_{\alpha,r} H,X_R,C^*_r(\BB\times G/H)).
\end{multline}
When $H=\{e\}$, the following corollary generalizes \cite[Remark~2.11]{eq:full} from the discrete case, and is  unsurprising, since for group coactions the regular representation of the crossed product is faithful.
We should also mention that the following corollary follows from \cite[Theorem~1]{SimsWilliamsAmenable}, which is proved by different means,
since the transformation groupoid $G\times G/H$ is amenable in the sense of \cite{AR}, 
being groupoid-equivalent to the amenable group $H$. 

\begin{cor}\label{amenable}
Let $\BB\to G$ be a Fell bundle over a locally compact group, and let $H$ be a closed subgroup of $G$.
If $H$ is amenable,
then the transformation bundle $\BB\times G/H\to G\times G/H$ is metrically amenable in the sense of \cite{SimsWilliamsAmenable}, \ie, the regular representation
\[
\Lambda:C^*(\BB\times G/H)\to C^*_r(\BB\times G/H)
\]
is an isomorphism.
\end{cor}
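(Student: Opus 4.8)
The plan is to read off the conclusion from the Rieffel Surjection \eqref{mansfield surjection}, using the fact that amenability of $H$ collapses the full and reduced crossed products appearing as the left-hand coefficient algebras. Recall that a Rieffel Surjection $(\Lambda,\Upsilon,\Lambda)$ is a surjective morphism of imprimitivity bimodules whose left and right coefficient maps have kernels that correspond to one another under the Rieffel correspondence determined by the bimodule $X$. Thus if I can show that the left-hand map is injective, the matching ideal on the right must vanish, forcing the right-hand map --- which is precisely the regular representation $\Lambda:C^*(\BB\times G/H)\to C^*_r(\BB\times G/H)$ --- to be injective, and hence (being surjective by construction) an isomorphism.

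First I would identify the left-hand coefficient map in \eqref{mansfield surjection} as the regular representation
\[
\Lambda:C^*(\BB\times G)\rtimes_\alpha H\to C^*(\BB\times G)\rtimes_{\alpha,r}H
\]
of the crossed product of $C^*(\BB\times G)$ by the $H$-action $\alpha$ arising from right translation in the second coordinate. Since $H$ is amenable, this regular representation is faithful: for an action of an amenable locally compact group, the canonical surjection of the full crossed product onto the reduced crossed product is an isomorphism. This is the only point at which the hypothesis on $H$ is used.

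Next I would invoke the kernel-correspondence property of the Rieffel Surjection. Because $X$ implements a Morita equivalence, the ideal lattices of its two coefficient algebras are isomorphic via the Rieffel correspondence, and the construction of the Rieffel Surjection in \cite[Theorem~3.1]{kmqw3} matches $\ker$ of the left coefficient map with $\ker$ of the right coefficient map. As the left-hand $\Lambda$ is injective, the corresponding ideal on the right is $\{0\}$, so the right-hand $\Lambda:C^*(\BB\times G/H)\to C^*_r(\BB\times G/H)$ is injective; being surjective, it is an isomorphism, which is exactly the asserted metric amenability of $\BB\times G/H$.

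The main obstacle is isolating and justifying the kernel-correspondence statement in the form I need, namely that a Rieffel Surjection which is injective on one coefficient algebra is an isomorphism on all three components. I expect this to come out either directly from the way the Rieffel Surjection is assembled in \cite{kmqw3} --- where the reduced triple is realized as a quotient of the full one by data determined symmetrically by the two kernels --- or from the general Rieffel correspondence applied to the surjection $\Upsilon$. Once that bookkeeping is settled, the amenability of $H$ is the sole analytic ingredient, and the conclusion follows immediately.
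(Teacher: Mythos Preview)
Your proposal is correct and is exactly the paper's approach: the paper's proof simply cites \eqref{mansfield surjection} and observes that amenability of $H$ makes the left-hand regular representation $\Lambda:C^*(\BB\times G)\rtimes_\alpha H\to C^*(\BB\times G)\rtimes_{\alpha,r} H$ an isomorphism. Your write-up spells out the underlying mechanism (the Rieffel correspondence matching the two kernels) that the paper leaves implicit, but the argument is the same.
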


\begin{proof}
This follows from \eqref{mansfield surjection}, because the first regular representation $\Lambda:C^*(\BB\times G)\rtimes_\alpha H\to C^*(\BB\times G)\rtimes_{\alpha,r} H$ is an isomorphism.
\end{proof}

\begin{rem}\label{compare aHR}
in \cite[Theorem~3.1]{aHRMansfield}, an Huef and Raeburn give a Morita equivalence
\begin{equation}\label{aHR}
C^*(\BB)\rtimes_\delta G\rtimes_{\hat\delta|,r} H
\ME
C^*(\BB)\rtimes_{\delta,r} G/H
\end{equation}
with an imprimitivity bimodule $\bar\DD$ that is a completion of Mansfield's algebra $\DD$ \cite{mansfield}.
They define $C^*(\BB)\rtimes_{\delta,r} G/H$ as the $C^*$-subalgebra
of $C^*(\BB)\rtimes_\delta G$ generated by $j_C^*(\BB)(C^*(\BB))\bar{j_G}(C_0(G/H)$, and they show that this coincides with Rieffel's generalized fixed-point algebra $(C^*(\BB)\rtimes_\delta G)^{\hat\delta|}$ associated to the action $\hat\delta|$ of $H$.
If follows from \cite[Lemma~3.2]{aHRMansfield} (see also \cite[Remark~3.4]{aHRMansfield}) that the imprimitivity bimodules $X_R$ and $\bar\DD$ are isomorphic.
Thus,  the an Huef-Raeburn Morita equivalence \eqref{aHR} is a quotient of that in 
\thmref{mansfield new}.
\end{rem}

\section{Fell's original imprimitivity theorem}
\label{fell section}

Finally, we derive one more well-known imprimitivity theorem from the YMW theorem, namely Fell's original imprimitivity theorem for $C^*$-algebraic bundles (\ie, Fell bundles) over groups. This one seems not to follow from the Symmetric Action theorem. 

To apply YMW theorem \cite[Theorem~6.4]{mw:fell}, we first need
a Fell-bundle equivalence:

\begin{thm}\label{fell}
Let $\AA\to G$ be a Fell bundle over a locally compact group, and let $H$ be a closed subgroup of $G$.
Let  $\AA\times G/H\to G\times G/H$ be the transformation Fell bundle \(where $G$ acts on $G/H$ by left translation\).
Let $\AA|_H\to H$ be the restricted Fell bundle.
Then $\AA$ gives an $(\AA\times G/H)-\AA|_H$ equivalence in the following way:
\begin{enumerate}
\item
$\AA\times G/H$ acts on the left of $\AA$ by
\[
(a,p(b)H)b=ab;
\]

\item
the left inner product is given by
\[
{}_L\<a,b\>=(ab^*,p(b)H);
\]

\item
$\AA|_H$ acts on the right of $\AA$ by right multiplication;

\item
the right inner product is given by
\[
\<a,b\>_R=a^*b.
\]
\end{enumerate}
\end{thm}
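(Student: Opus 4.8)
The plan is to exhibit $\AA$, viewed as an upper semicontinuous Banach bundle over the space $Z:=G$, as an $(\AA\times G/H)$--$\AA|_H$ equivalence in the sense of \cite{mw:fell}, and then feed it into the YMW theorem. The first step is to record the underlying groupoid equivalence. I would take the left moment map $\rho\colon G\to G/H$, $\rho(s)=sH$, onto the unit space of the transformation groupoid $G\times G/H$, and the trivial right moment map $G\to H^{(0)}$; then $G\times G/H$ acts on the left of $G$ by $(s,tH)\cdot t=st$ and $H$ acts on the right by translation. One checks that both actions are free and proper, that $\rho$ descends to a homeomorphism of $G/H=Z/H$ onto $(G\times G/H)^{(0)}$, and that the left-orbit space is a single point, matching $H^{(0)}$. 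This is the familiar fact that $G$ implements a groupoid equivalence between $G\times G/H$ and $H$, and it guarantees that the moment maps governing the bundle operations in (i)--(iv) are the correct ones.

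Next I would check the degree bookkeeping for the four operations, i.e.\ that they land in the right fibres over the right groupoid elements. For $a\in\AA_s$ and $b\in\AA_t$ one has ${}_L\<a,b\>=(ab^*,tH)\in(\AA\times G/H)_{(st\inv,tH)}$, whose range is $sH=\rho(a)$ and source is $tH=\rho(b)$; and $\<a,b\>_R=a^*b$ is defined exactly when $sH=tH$, in which case $s\inv t\in H$ and $a^*b\in(\AA|_H)_{s\inv t}$. With this in place, the purely algebraic axioms of an equivalence all collapse to the Fell-bundle axioms for $\AA$: the adjoint symmetries ${}_L\<a,b\>^*={}_L\<b,a\>$ and $\<a,b\>_R^*=\<b,a\>_R$ follow from $(ab^*)^*=ba^*$ together with the involution on the transformation bundle; sesquilinearity and compatibility with the two actions follow from bilinearity and associativity of multiplication; the key linking identity ${}_L\<a,b\>\,c=a\,\<b,c\>_R$ is literally the associativity $ab^*c=a(b^*c)$; and positivity together with the norm conditions $\|{}_L\<a,a\>\|=\|aa^*\|=\|a\|^2=\|a^*a\|=\|\<a,a\>_R\|$ are exactly the $C^*$-conditions $aa^*\ge0$, $a^*a\ge0$ holding in the fibre $C^*$-algebra $\AA_e$.

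It then remains to verify the two topological requirements: continuity of the structure maps and fullness. Continuity of the two actions and the two inner products follows from continuity of multiplication and involution in $\AA$, combined with the explicit description of the topology on the transformation Fell bundle $\AA\times G/H$ from \cite[Section~4]{kmqw1} and on the restricted bundle $\AA|_H$. For fullness, saturation of $\AA$ gives $\overline{\spn}\,\AA_{st}\AA_{t\inv}=\AA_s$ for every $t$, so the left inner products densely span each fibre $(\AA\times G/H)_{(s,tH)}=\AA_s$; likewise $\overline{\spn}\,\AA_{g\inv}\AA_{gh}=\AA_h$ (take $g=e$) shows the right inner products densely span each fibre $(\AA|_H)_h=\AA_h$.

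I expect the main obstacle to lie on the topological rather than the algebraic side: because every equivalence identity reduces to a Fell-bundle axiom, the substantive work is confirming that the four operations are continuous as maps of upper semicontinuous Banach bundles, keeping careful track of the quotient and restriction topologies on $\AA\times G/H$ and $\AA|_H$, and confirming freeness and properness of the underlying $G\times G/H$- and $H$-actions on $G$. It is worth flagging that fullness is precisely where saturation of $\AA$ enters; a non-saturated bundle would leave fibres of $\AA\times G/H$ unreached by the inner products, so this hypothesis is genuinely needed.
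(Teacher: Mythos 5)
Your proposal is correct and follows exactly the route the paper takes: the paper's proof consists of the single remark that the verification of the conditions of \cite[Definition~6.1]{mw:fell} is routine, and your outline simply spells out those routine checks (the underlying groupoid equivalence $G\times G/H \sim H$ implemented by $G$, the fibre bookkeeping, the reduction of the algebraic axioms to associativity and the $C^*$-identity in $\AA$, continuity, and fullness via saturation). Your degree computations for ${}_L\<a,b\>$ and $\<a,b\>_R$ are accurate, and your observation that saturation is what drives fullness is a worthwhile point the paper leaves implicit.
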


\begin{proof}
The computations required to verify the conditions of \cite[Definition~6.1]{mw:fell} are routine.
\end{proof}

We recover Fell's imprimitivity theorem \cite[Theorem~XI.14.17]{fd2}, which can be rephrased as follows:

\begin{cor}\label{fell original}
With the hypotheses of \thmref{fell}, 
$\Gamma_c(\AA)$ completes to a
$C^*(\AA\times G/H)-C^*(\AA|_H)$ imprimitivity bimodule.
\end{cor}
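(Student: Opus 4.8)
The plan is to obtain the imprimitivity bimodule directly from the Fell-bundle equivalence of \thmref{fell} by invoking the Yamagami-Muhly-Williams equivalence theorem \cite[Theorem~6.4]{mw:fell}, just as in the proof of \thmref{mansfield new}, where that theorem converted a Fell-bundle equivalence into a Morita equivalence of $C^*$-algebras. The first step is to make explicit the groupoid equivalence underlying \thmref{fell}: the equivalence space is $G$ itself, viewed as a $(G\times G/H)-H$ equivalence. Here the transformation groupoid $G\times G/H$ acts on the left of $G$ by $(s,gH)\cdot g=sg$, with left moment map $g\mapsto gH$ onto the unit space $G/H$, while $H$ acts on the right by right translation $g\mapsto gh$, with right moment map the constant map onto the one-point unit space of $H$. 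Both actions are free and proper, they commute, the $H$-orbit space is $G/H=(G\times G/H)^{(0)}$, and the $(G\times G/H)$-orbit space is a single point $=H^{(0)}$; thus $G$ is genuinely an equivalence in the groupoid sense, matching the two Fell bundles $\AA\times G/H\to G\times G/H$ and $\AA|_H\to H$ over their respective base groupoids.

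With this identification in hand, the Fell bundle $\AA$, regarded as an upper semicontinuous, separable Banach bundle over the equivalence space $G$, is by \thmref{fell} an equivalence of these two Fell bundles in the sense of \cite[Definition~6.1]{mw:fell}: items (i)--(iv) of \thmref{fell} supply precisely the left and right module actions and the $\AA\times G/H$- and $\AA|_H$-valued pairings demanded there. Feeding this into \cite[Theorem~6.4]{mw:fell} then produces the conclusion verbatim: $\Gamma_c(\AA)$ completes to a $C^*(\AA\times G/H)-C^*(\AA|_H)$ imprimitivity bimodule, where the coefficient-algebra-valued inner products on $\Gamma_c(\AA)$ are obtained by integrating the fiberwise inner products ${}_L\langle a,b\rangle=(ab^*,p(b)H)$ and $\langle a,b\rangle_R=a^*b$ of \thmref{fell}(ii) and (iv) against the relevant Haar systems, and the commuting module actions are the associated convolutions.

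Because the equivalence axioms themselves are discharged (routinely) in \thmref{fell}, there is essentially no new analytic obstacle in the corollary: the content is entirely a matter of matching data to the hypotheses of \cite[Theorem~6.4]{mw:fell}. The one point deserving a moment's care is confirming that the standing hypotheses of that theorem are in force --- the existence of left Haar systems on the groupoids $G\times G/H$ and $H$, together with second countability, separability, and upper semicontinuity of all the bundles involved --- but these all hold by our blanket conventions from \secref{prelims}. Once the formulas of \thmref{fell}(i)--(iv) are recognized as exactly the input that \cite[Definition~6.1]{mw:fell} requires, the corollary follows at once.
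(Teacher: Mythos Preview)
Your proposal is correct and follows exactly the paper's approach: the paper's proof is the single sentence ``This follows immediately from \thmref{fell} and the YMW Theorem,'' and you have simply unpacked that by spelling out the underlying $(G\times G/H)$--$H$ groupoid equivalence on $G$ and checking the standing hypotheses. There is no substantive difference.
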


\begin{proof}
This follows immediately from \thmref{fell} and the YMW Theorem.
\end{proof}

\begin{rem}
The above proof of Fell's theorem is quite a bit shorter, and we believe more natural, than Fell and Doran's. Fell and Doran had to work quite hard, developing a version of the transformation bundle over $G$ that incorporates the left action of $G$ on $G/H$. Our job is much easier because we allow ourselves to consider the transformation Fell bundle $\AA\times G/H$ over the groupoid $G\times G/H$; Fell and Doran did not avail themselves of the technology of groupoids, so all their bundles had to be over groups.
\end{rem}

%\bibliographystyle{amsplain}
%\bibliography{cstar}

\providecommand{\bysame}{\leavevmode\hbox to3em{\hrulefill}\thinspace}
\providecommand{\MR}{\relax\ifhmode\unskip\space\fi MR }
% \MRhref is called by the amsart/book/proc definition of \MR.
\providecommand{\MRhref}[2]{%
  \href{http://www.ams.org/mathscinet-getitem?mr=#1}{#2}
}
\providecommand{\href}[2]{#2}

\end{document}